\newtheorem{theorem}{Theorem}
\newtheorem{definition}{Definition}
\newtheorem{lemma}{Lemma}
\newcommand{\ZZ}{\mathbb{Z}}
\newcommand{\NN}{\mathbb{N}}
\newcommand{\calp}{\mathcal{P}}
\newcommand{\lcm}{{\mathrm{lcm}}}
\newcommand{\diam}{{\mathrm{diam}}}
\begin{document}

\title{On the minimal period of integer tilings} 
\author{Izabella {\L}aba and Dmitrii Zakharov}

\date{\today}

\begin{abstract}
If a finite set $A$ tiles the integers by translations, it also admits a tiling whose period $M$ has the same prime factors as $|A|$. We prove that the minimal period of such a tiling is bounded by $\exp(c(\log D)^2/\log\log D)$, where $D$ is the diameter of $A$. In the converse direction, given $\epsilon>0$, we construct tilings whose minimal period has the same prime factors as $|A|$ and is bounded from below by $D^{3/2-\epsilon}$. We also discuss the relationship between minimal tiling period estimates and the Coven-Meyerowitz conjecture.

MSC subject codes: 05B45, 52C22.
\end{abstract}

\maketitle

\section{Introduction}

%%%%%%%%%%%%%%%%%%%%%%%%%%%%%%%%%%%%

A finite set $A\subset\ZZ$ tiles the integers by translations if there exists a covering of $\ZZ$ by pairwise disjoint translates of $A$. More formally, there exists a translation set $T\subset\ZZ$ such that every integer $n\in\ZZ$ has a unique representation $n=a+t$ with $a\in A$ and $t\in T$. 

Newman \cite{New} proved that any tiling of $\ZZ$ by a finite set $A$ must be periodic, so that $T=B\oplus M\ZZ$ for some $M\in\NN$ and $B\subset \{0,1,\dots,M-1\}$. His pigeonholing argument shows that the least period of the tiling must satisfy $M\leq 2^D$, where 
$$
D:=\diam(A)=\max(A)-\min(A)
$$
is the diameter of $A$. This bound was subsequently improved by Kolountzakis \cite{Kol}, Ruzsa \cite[Appendix]{Tij2}, and Bir\'o \cite{Biro}. The best estimate to date is due to Bir\'o, who proved the following: for any $\epsilon>0$ there exists a number $D(\epsilon)$ such that if $A\oplus T = \ZZ$ is a tiling and $D=\diam(A)\geq D(\epsilon)$, then the least period of $T$ is at most $\exp(D^{1/3+\epsilon})$.

In the other direction, Kolountzakis \cite{Kol} constructed tilings with the least period $M$ bounded from below by $cD^2$ for some absolute constant $c$. Steinberger \cite{Steinberger2} gave an improved construction with 
$$M\geq \exp((\log D)^2/4\log\log D);$$
 in particular, there exist tilings whose least period is superpolynomial in $D$.
 
 In the sequel, we will always assume that $A\subset\ZZ$ is finite and nonempty, and we will always use $D$ to denote the diameter of $A$.

\begin{definition}
Assume that $A$ tiles $\ZZ$ by translation. We define the {\em minimal tiling period of $A$} to be the smallest number $\calp(A)$ such that there exists a $\calp(A)$-periodic set $T\subset\ZZ$ satisfying $A\oplus T=\ZZ$.
\end{definition}

If $A\oplus T$ is a tiling and $M$ is the least period of $T$, we clearly have $\calp(A)\leq M$. The inequality may be strict, since there may exist a different tiling of $\ZZ$ whose least period is smaller. For example, we have the following reduction, due to Coven and Meyerowitz \cite{CM} and based on the dilation theorem of Tijdeman \cite{Tij}.

\begin{lemma}\label{CM-reduction}\cite[Lemma 2.3]{CM} 
Assume that $A$ tiles $\ZZ$ with period $M$, and suppose that $M=mM'$, where $M'$ has the same prime factors as $|A|$ and $m$ is relatively prime to $|A|$. Then $A$ also tiles $\ZZ$ with period $M'$.
\end{lemma}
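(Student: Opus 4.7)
The plan is to apply Tijdeman's dilation theorem to replace $A$ with $mA$ and then exploit the fact that $mA$ lies in $m\ZZ$. Concretely, write $A \oplus T = \ZZ$ where $T$ is $M$-periodic. Since $\gcd(m, |A|) = 1$, Tijdeman's result from \cite{Tij} gives that $mA \oplus T = \ZZ$ is also a tiling, with the \emph{same} translation set $T$.

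Because $mA \subset m\ZZ$, this tiling respects residues modulo $m$. For each $r \in \{0, 1, \dots, m-1\}$, let
$$T_r := T \cap (r + m\ZZ).$$
Every $n \in r + m\ZZ$ has a unique representation $n = ma + t$ with $a \in A$, $t \in T$, and that $t$ automatically lies in $T_r$ (since $ma \equiv 0 \pmod m$). Hence $mA \oplus T_r = r + m\ZZ$ is itself a tiling, and in particular $T_r \neq \emptyset$.

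Specialize to $r = 0$: we obtain $mA \oplus T_0 = m\ZZ$. Dividing through by $m$, the set $T' := T_0 / m \subset \ZZ$ satisfies $A \oplus T' = \ZZ$. It remains only to check the period: since $T + M = T$ and $m \mid M$, the decomposition is preserved, so $T_r + M = T_r$ for each $r$; hence $T_0$ is $M$-periodic, and $T' = T_0/m$ is $(M/m)$-periodic. But $M/m = M'$, so $A$ tiles $\ZZ$ with period $M'$, as claimed.

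There is essentially no obstacle beyond correctly citing Tijdeman's dilation theorem; the rest is bookkeeping. The one point worth verifying carefully is that the $M$-periodicity of $T$ transfers to $T_0$ and then descends under division by $m$ to $M' = M/m$-periodicity of $T'$, which follows immediately from $m \mid M$.
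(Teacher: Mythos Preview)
Your proof is correct. The paper does not give its own proof of this lemma; it merely cites \cite[Lemma~2.3]{CM} and remarks that the result is ``based on the dilation theorem of Tijdeman \cite{Tij}.'' Your argument is precisely a clean execution of that approach: apply Tijdeman to pass from $A$ to $mA$, restrict to the residue class $0 \pmod m$, and divide by $m$ to recover a tiling by $A$ with period $M/m = M'$. This is the standard argument behind the Coven--Meyerowitz lemma, so your proposal matches the intended method.
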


The tilings constructed in \cite{Kol} and \cite{Steinberger2} have long periods $M$, but in both cases we have $M=m|A|$ with $m$ relatively prime to $A$. It follows that 
$A$ also admits the tiling $A\oplus |A|\ZZ=\ZZ$, so that the minimal tiling period for $A$ is $\calp(A)=|A|\leq D+1$.

Our first result is an upper bound on $\calp(A)$ in terms of $D$.

\begin{theorem}\label{upper-bound}
Assume that $A\oplus T=\ZZ$ is a tiling, and that the least period $M$ of $T$ has the same prime factors as $|A|$. (By Lemma \ref{CM-reduction}, if $A$ tiles $\ZZ$, then such a tiling always exists.) Then
$$
\calp(A)\leq M\leq \exp(c(\log D)^2/\log\log D)$$
for some absolute constant $c>0$. 
\end{theorem}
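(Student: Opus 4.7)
The plan is to work with the $M$-periodic reduction of the tiling. Writing $T=B\oplus M\ZZ$ with $B\subset\{0,\dots,M-1\}$ gives $A\oplus B=\ZZ_M$, and hence $A(x)\,B(x)\equiv 1+x+\cdots+x^{M-1}\pmod{x^M-1}$. For every divisor $d>1$ of $M$, the cyclotomic polynomial $\Phi_d$ therefore divides $A(x)B(x)$ in $\ZZ[x]$ and, being irreducible, divides at least one of $A(x)$ or $B(x)$.

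The main step is to extract structural information from the minimality of the period $M$. For each prime $p\mid M$, if $\Phi_d\mid B(x)$ for every $d\mid M$ with $v_p(d)=v_p(M)$, then the product $\prod\Phi_d=(x^M-1)/(x^{M/p}-1)$ would divide $B(x)$, forcing $B$ to be a union of cosets of the subgroup $(M/p)\ZZ/M\ZZ$ and contradicting that $M$ is the least period of $T$. Hence there exists a divisor $d_p\mid M$ with $v_p(d_p)=v_p(M)$ and $\Phi_{d_p}\mid A(x)$. Since $\deg A\leq D$, this yields $\phi(d_p)=\deg\Phi_{d_p}\leq D$; bootstrapping the crude bound $\phi(n)\geq c\sqrt{n}$ to first confine $d_p=O(D^2)$ and then applying the classical estimate $\phi(n)\gg n/\log\log n$ gives $d_p\leq CD\log\log D$. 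Because $p^{v_p(M)}$ divides $d_p$, I conclude
\[
p^{v_p(M)}\ \leq\ CD\log\log D\qquad\text{for every prime }p\mid M.
\]

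Finally, the hypothesis that $M$ and $|A|$ have the same prime factors, together with $|A|\leq D+1$ and the primorial estimate $p_1\cdots p_k=e^{(1+o(1))k\log k}$, gives $\omega(M)=\omega(|A|)\leq (1+o(1))\log D/\log\log D$. Multiplying the per-prime bound,
\[
\log M\ =\ \sum_{p\mid M}v_p(M)\log p\ \leq\ \omega(M)\cdot\log(CD\log\log D)\ =\ O\!\left(\frac{(\log D)^2}{\log\log D}\right),
\]
which is the claimed estimate. I expect the only genuinely conceptual step to be the minimality-to-cyclotomic-factor correspondence in the second paragraph; the remaining ingredients (totient lower bounds and primorial estimates) are standard, and the combinatorial content is simply that the least-period hypothesis forces an obstruction cyclotomic factor of $A$ at each prime-power level of $M$.
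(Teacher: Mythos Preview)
Your proof is correct and follows essentially the same route as the paper: the key lemma (for each prime $p\mid M$, minimality of the period forces some $\Phi_{d_p}\mid A$ with $v_p(d_p)=v_p(M)$) and the bound $\omega(M)\ll \log D/\log\log D$ via $|A|\ge p_1\cdots p_d$ are exactly the paper's ingredients. The only difference is that the paper avoids your detour through $\phi(n)\gg n/\log\log n$ by observing directly that $\varphi(d_p)\ge \varphi(p^{v_p(M)})\ge p^{v_p(M)}/2$ (since $p^{v_p(M)}\| d_p$ and $\varphi$ is multiplicative), giving the cleaner per-prime bound $p^{v_p(M)}\le 2D$ and hence $M\le (2D)^d$.
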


One might ask whether the condition on the prime factors of $M$ prevents the tiling from having long periods altogether. We provide a partial answer in Theorem \ref{long-periods}. Our lower bound on the tiling period is worse than that in \cite{Kol} and especially in \cite{Steinberger2}, but, unlike in \cite{Kol} and \cite{Steinberger2}, $M$ does not have any new prime factors that are not already present in $|A|$. For a discussion of the formal similarity between our upper bound in Theorem \ref{upper-bound} and Steinberger's lower bound in \cite{Steinberger2}, see the remark at the end of Section 2.

\begin{theorem}\label{long-periods}
For any $0<\beta<3/2$, there exists a tiling $A\oplus T=\ZZ$ with least period $M$ such that $M\geq D^\beta$ and 
$M$ has the same prime factors as $|A|$.

\end{theorem}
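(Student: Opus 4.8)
The plan is to work entirely with mask polynomials and cyclotomic divisibility. Writing $A(x)=\sum_{a\in A}x^{a}$ and choosing a complement $B\subset\{0,1,\dots,M-1\}$, a periodic tiling $A\oplus(B\oplus M\ZZ)=\ZZ$ is equivalent to $A(x)B(x)\equiv 1+x+\cdots+x^{M-1}\pmod{x^{M}-1}$, which (in the absence of wraparound, i.e. when $A\oplus B=\{0,\dots,M-1\}$ honestly) says that for every $d\mid M$ with $d>1$ the cyclotomic $\Phi_{d}$ divides exactly one of $A(x),B(x)$. The crucial ingredient is the \emph{period criterion}: the least period of $T=B\oplus M\ZZ$ equals $M$ if and only if $B$ is not $(M/p)$-periodic for any prime $p\mid M$, and $B$ is $(M/p)$-periodic precisely when $\Phi_{d}\mid B$ for \emph{all} $d\mid M$ with $v_{p}(d)=v_{p}(M)$. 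Hence $T$ has full least period $M$ iff, for each prime $p\mid M$, some divisor $d$ of maximal $p$-valuation has $\Phi_{d}\mid A(x)$ rather than $\Phi_{d}\mid B(x)$. The problem is thereby recast as: choose $M$ with few prime factors and a genuine $0/1$ polynomial $A(x)\mid\frac{x^{M}-1}{x-1}$ of \emph{small degree} whose cyclotomic factor set $S_{A}=\{d:\Phi_{d}\mid A\}$ blocks every maximal prime-power shift, while the complementary product of cyclotomics is again $0/1$.

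Two structural constraints fix the shape of the construction, and I would record them first. A single prime power is useless: if $M=p^{n}$, forcing the period requires $\Phi_{p^{n}}\mid A$, and since $\deg\Phi_{p^{n}}=p^{n-1}(p-1)$ is already comparable to $M$, one obtains only $M\approx D$. So several distinct primes are needed. The ``same prime factors'' requirement is then easy to arrange: adding the cheap factors $\Phi_{p_{i}}$ (degree $p_{i}-1$) to $S_{A}$ guarantees $p_{i}\mid|A|=A(1)$ for every $i$, since $\Phi_{p^{k}}(1)=p$ while non-prime-power cyclotomics satisfy $\Phi_{d}(1)=1$; because $|A|\mid M$, its prime factors are then exactly those of $M$. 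The genuine difficulty is that the \emph{cheap} way to block the $p$-direction, namely placing the prime power $\Phi_{p^{n}}$ itself into $A$, interacts fatally with tiling: the Coven--Meyerowitz condition (T2) would then force the full top cyclotomic $\Phi_{M}$ into $A$, whose degree is $\approx M$, collapsing the ratio back to $\beta=1$. Consequently the construction \emph{must} be a non-standard (T2-violating) tiling, one whose factor set blocks every maximal prime-power shift yet omits $\Phi_{M}$.

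The construction I would therefore pursue is a \emph{skew} or \emph{fibered} tiling rather than a plain direct sum of arithmetic progressions (the latter are standard and yield only $\beta=1$): over a base coordinate running modulo one of the primes, one places progressions at the remaining prime scales whose offsets vary with the base coordinate. This skewing is exactly what lets $S_{A}$ meet a divisor of maximal $p_{i}$-valuation for each $i$ without containing $\Phi_{M}$. I would then verify that the complement $B(x)=\frac{(x^{M}-1)/(x-1)}{A(x)}$ is honestly $0/1$ by direct-sum and Chinese-remainder bookkeeping, confirm the full least period from the criterion above, and finally optimize the free parameters (the primes $p_{i}$, their exponents $n_{i}$, and the interleaving scales) so as to minimize $\diam(A)=\deg A(x)\approx\sum_{d\in S_{A}}\phi(d)$ against $M=\prod_{i}p_{i}^{n_{i}}$, letting them grow to obtain $M\ge D^{\beta}$ for any prescribed $\beta<3/2$.

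The main obstacle—and the step that pins down the exponent—is precisely this tiling verification together with the degree optimization. Blocking each maximal prime-power direction forces a definite amount of cyclotomic degree into $A$, and the non-standard structure needed to avoid $\Phi_{M}$ prevents full efficiency: a naive degree accounting would suggest one could reach $M\approx D^{2}$, but the requirement that the complementary cyclotomic product remain a genuine $0/1$ set forces additional factors into $A$, and the resulting balance caps the ratio at $M\approx D^{3/2}$. Making this precise—pinning down exactly which extra cyclotomic factors the $0/1$ constraint on $B$ forces into $A$, and showing they can be packed with total degree $\approx M^{2/3}$ while $B$ stays aperiodic—is the technical heart of the argument; once such a family is exhibited, the parameter limit delivers the stated bound for every $\beta<3/2$.
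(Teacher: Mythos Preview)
Your proposal correctly isolates the period criterion (essentially Lemma~\ref{top-powers}), but then heads in a direction that is both harder than necessary and ultimately incomplete. You insist that the complexity must live in $A$: you want $A$ itself to be a ``skew'' or (T2)-violating tile whose cyclotomic factor set blocks every maximal prime-power direction while keeping $\deg A$ small, and you take $B$ to be the honest polynomial quotient $(X^{M}-1)/((X-1)A(X))$. But you never actually construct such an $A$; the closing paragraph simply promises that ``once such a family is exhibited'' the bound follows. Exhibiting that family \emph{is} the content of the theorem, and it is missing from your proposal.

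Your belief that $A$ must violate (T2) is also a red herring. The paper takes $A$ to be the simplest possible tile, a rectangular box: with $M=(p_1p_2p_3)^n$ and large primes $p_1<p_2<p_3<2p_1$,
\[
A(X)=\prod_{i=1}^{3}\bigl(1+X^{M/p_i^{n}}+\cdots+X^{(p_i-1)M/p_i^{n}}\bigr).
\]
This $A$ satisfies (T1) and (T2) with Coven--Meyerowitz prime-power set $\{p_1,p_2,p_3\}$, so (T2) forces only $\Phi_{p_1p_2p_3}\mid A$, not $\Phi_M$. Its cyclotomic factors already include, for instance, $\Phi_{p_1^{n}p_2p_3^{n}}$ (from the $i=2$ term), which has maximal $p_1$- and $p_3$-valuation; every direction is thus blocked without any skewing of $A$ at all. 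The complexity is placed entirely in the complement: the standard lattice $B_0$ \emph{is} $(M/p_i)$-periodic for each $i$ (there is wraparound, so your ``$\Phi_d\mid A$ rather than $\Phi_d\mid B$'' dichotomy breaks down here), and the paper destroys these periods by three explicit Szab\'o-type column shifts to obtain a $B$ of least period exactly $M$. The exponent $3/2$ then falls out of the elementary estimate $D\le 3M/p_1^{n-1}$ together with $M=(p_1p_2p_3)^n$; no delicate balancing of cyclotomic degrees against a $0/1$ constraint on a quotient is needed.
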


We do not have examples of sets that tile {\it only} with long periods. The set $A$ constructed in the proof of Theorem \ref{long-periods} also tiles with period $|A|$, as do those in \cite{Kol} and \cite{Steinberger2}. Intuitively, a ``simple" tile $A$ allows significant freedom in the choice of the tiling complement, ranging from simple to more complicated. (For an analogue of this in metric geometry, one may think of cube tilings in high-dimensional spaces.)

We further note that if a tile $A$ satisfies the Coven-Meyerowitz conditions (T1) and (T2) \cite{CM}, it admits a tiling with period at most $2D$. Thus any tile with $\calp(A)>2D$ would also have to provide a counterexample to the Coven-Meyerowitz conjecture. We discuss this connection in Section \ref{CM-sec}.

\bigskip

{\em Acknowledgements.} The authors are grateful to Rachel Greenfeld for stimulating discussions. The first author was supported by NSERC Discovery Grant 22R80520.
The second author 
was supported by the Jane Street Graduate Fellowship.

\section{Proof of Theorem \ref{upper-bound}}\label{sec-thm1}
\label{upper-bound-sec}

We will need the polynomial formulation of integer tilings. Assume that $A\oplus T=\ZZ$ and that $T$ is $M$-periodic, so that $T=B\oplus M\ZZ$ for some finite set $B\subset\ZZ$. Then $A\oplus B$ mod $M$ is a factorization of the cyclic group $\ZZ_M$. We write this as $A\oplus B=\ZZ_M$. 

By translational invariance, we may assume that
$A,B\subset\{0,1,\dots\}$ and that $0\in A\cap B$. The {\it mask polynomials} of $A$ and $B$ are
$$
A(X)=\sum_{a\in A}X^a,\ B(X)=\sum_{b\in B}X^b .
$$
Then $A\oplus B=\ZZ_M$ means that
\begin{equation}\label{poly-e1}
A(X)B(X)=1+X+\dots+X^{M-1}\ \mod (X^M-1).
\end{equation}
 We may rephrase this in terms of cyclotomic polynomials as follows. Recall that the $s$-th cyclotomic polynomial $\Phi_s$ for $s\in\NN$ is the minimal polynomial of $e^{2\pi i/s}$. Alternatively, $\Phi_s$ may be defined inductively via the identity 
$$
X^N-1=\prod_{s|N} \Phi_s(X).
$$
Then (\ref{poly-e1}) is equivalent to 
\begin{equation}\label{poly-e2}
  |A||B|=M\hbox{ and }\Phi_s(X)\, |\, A(X)B(X)\hbox{ for all }s|M,\ s\neq 1.
\end{equation}
Since $\Phi_s$ are irreducible, each $\Phi_s(X)$ with $s|M$ must divide at least one of $A(X)$ and $B(X)$.

\begin{lemma}\label{top-powers}
Assume that $A\oplus T=\ZZ$ is a tiling with the least period $M$, and let $M=p_1^{n_1}\dots p_d^{n_d}$ be the prime factorization of $M$. Then for every $j\in\{1,\dots,d\}$ there exists $s|M$ such that $\Phi_s|A$ and $p_j^{n_j}|s$.
 \end{lemma}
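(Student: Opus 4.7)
The plan is to argue by contradiction. Suppose that for some index $j$, no divisor $s\mid M$ with $p_j^{n_j}\mid s$ satisfies $\Phi_s\mid A$. I will show that this forces $T$ to be $(M/p_j)$-periodic, contradicting the minimality of $M$.

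By (\ref{poly-e2}), every $\Phi_s$ with $s\mid M$, $s\neq 1$ divides the product $A(X)B(X)$, and since each $\Phi_s$ is irreducible in $\ZZ[X]$, it must divide at least one of $A(X)$ or $B(X)$. Under the contrary hypothesis, every $\Phi_s$ with $s\mid M$ and $p_j^{n_j}\mid s$ divides $B(X)$. Because distinct cyclotomic polynomials are pairwise coprime, their product divides $B(X)$ in $\ZZ[X]$. The key identity is
$$
\prod_{\substack{s\mid M\\ p_j^{n_j}\mid s}}\Phi_s(X)\;=\;\frac{\prod_{s\mid M}\Phi_s(X)}{\prod_{s\mid M/p_j}\Phi_s(X)}\;=\;\frac{X^M-1}{X^{M/p_j}-1}\;=\;\sum_{k=0}^{p_j-1}X^{kM/p_j},
$$
where I have used that a divisor $s$ of $M$ fails to divide $M/p_j$ precisely when $v_{p_j}(s)=n_j$.

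Consequently, $B(X)=C(X)\bigl(1+X^{M/p_j}+\dots+X^{(p_j-1)M/p_j}\bigr)$ for some $C(X)\in\ZZ[X]$. To convert this polynomial statement into a structural one about $B$, I would partition $B\subset[0,M)$ into the blocks $B_k=B\cap[kM/p_j,(k+1)M/p_j)$ for $k=0,\dots,p_j-1$, with mask polynomials $B_k(X)$ supported on $[0,M/p_j)$. Matching coefficients in the factorization of $B(X)$ then forces $B_k(X)=C(X)$ for every $k$; in particular $C$ has $\{0,1\}$ coefficients and $B=C\oplus\{0,M/p_j,\dots,(p_j-1)M/p_j\}$ as sets.

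This decomposition yields $T=B\oplus M\ZZ=C\oplus(M/p_j)\ZZ$, so $T$ has period $M/p_j<M$, contradicting the assumption that $M$ is the least period of $T$. The only step that really deserves care is the passage from the polynomial identity to the block decomposition of $B$; the remaining ingredients are just (\ref{poly-e2}), the irreducibility and pairwise coprimality of the $\Phi_s$, and the standard telescoping of $\prod_{s\mid N}\Phi_s(X)=X^N-1$.
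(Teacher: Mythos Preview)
Your proof is correct and follows essentially the same route as the paper's: contradiction, the identity $\prod_{s\mid M,\,p_j^{n_j}\mid s}\Phi_s(X)=(X^M-1)/(X^{M/p_j}-1)$, and the conclusion that $B$ is $(M/p_j)$-periodic. The paper makes the ``block decomposition'' step you flagged explicit by observing that $\deg B(X)\le M-1$ forces $\deg C(X)<M/p_j$, which is exactly what justifies your coefficient-matching.
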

 
\begin{proof}
We argue by contradiction. Write $T=B\oplus M\ZZ$ for some $B\subset\{0,1,\dots,M-1\}$, so that $A\oplus B=\ZZ_M$. 
Suppose that there is a $j\in\{1,\dots,d\}$ such that 
$$
\hbox{ if }s|M\hbox{ and }p_j^{n_j}|s, \hbox{ then }\Phi_s\nmid A.
$$
It follows that
$$
P(X):= \frac{X^M-1}{X^{M/p_j}-1} = \prod_{s|M,\, s\nmid\frac{M}{p_j}} \Phi_s(X)
$$
divides $B(X)$. Hence
$$
B(X)=P(X)B_0(X)=\left(1+X^{M/p_j}+X^{2M/p_j}+\dots+X^{(p_j-1)M/p_j}\right)B_0(X)
$$
for some polynomial $B_0\in \ZZ[X]$.  If $B_0(X)$ had degree $M/p_j$ or higher, then $B(X)$ would have degree $M$ or higher, contradicting the assumption that $B\subset\{0,1,\dots,M-1\}$. It follows that $B_0(X)$ is the mask polynomial of a set $B_0\subset\{0,1,\dots,(M/p_j)-1\}$, and that $B$ is $(M/p_j)$-periodic, contradicting the minimality of $M$.
\end{proof}

Recall that the degree of the cyclotomic polynomial $\Phi_s$ is equal to $\varphi(s)$, the Euler totient function. 
By Lemma \ref{top-powers}, for each $j\in\{1,\dots,d\}$ there is an $s_j$ such that $p_j^{n_j}\mid s_j$ and $\Phi_{s_j}|A$, so that
\[
D=\deg A(X) \geq  \deg \Phi_{s_j} = \varphi(s_j) \ge \varphi(p_j^{n_j}) =  (1-p_j^{-1}) p_j^{n_j} 
 \ge  p_j^{n_j}/2.
\]
Taking the product over $j\in \{1, \ldots, d\}$ leads to an upper bound $M \le (2D)^d$. 

On the other hand, since $|A|$ and $M$ have the same prime factors, we have
\[
D \ge |A|-1 \ge p_1\ldots p_d-1 \ge d! - 1,
\]
where we crudely lower-bounded the size of the $j$-th prime number by $j$. This readily implies that 
\begin{equation}\label{d-bound}
d \le c \frac{\log D}{\log \log D}
\end{equation}
for some absolute constant $c>0$. Plugging this into the upper bound on $M$ finishes the proof.

\bigskip
\noindent
{\bf Remark:} Our upper bound in Theorem \ref{upper-bound}, and Steinberger's lower bound in \cite{Steinberger2}, have the same form because they are both based on the same estimate for prime numbers. Specifically, our proof above uses that if $p_1,\dots,p_d$ are primes such that $p_1\dots p_d\leq D$, then $d$ obeys the bound (\ref{d-bound}). We then combine this with $M\geq (2D)^d$ to get our conclusion.
Steinberger optimizes the estimate (\ref{d-bound}) and constructs the set $A$ so that $|A|=p_1\dots p_d$ and $|A|\leq D$, but then the period of the tiling he constructs is divisible by additional $d$ primes $q_1,\dots,q_d$ of size about $D$; thus $M\geq D^d$ in his construction. This leads to a lower bound of the same general form as our upper bound in Theorem \ref{upper-bound}, but we were not able to replicate his construction without introducing the additional large primes not appearing in $|A|$.

%%%%%%%%%%%%%%%%%%%%%%%%%%%%

\section{Proof of Theorem \ref{long-periods}}

Let $p_1,p_2,p_3$ be large primes such that $p_1<p_2<p_3<2p_1$. Let $M=(p_1p_2p_3)^n$ for some large $n$ to be determined later. Define
$$
A(X)=\prod_{i=1}^3 \left( 1+ X^{M/p_i^n}+X^{2M/p_i^n}+\dots +X^{(p_i-1)M/p_i^n}\right),
$$
and $B_0(X)=\prod_{i=1}^3 B_i(X)$, where
$$
B_i(X)= 1+ X^{M/p_i^{n-1}}+X^{2M/p_i^{n-1}}+\dots +X^{(p_i^{n-1}-1)M/p_i^{n-1}}.
$$
Then $A\oplus B_0=\ZZ_M$ and $M$ has the same prime factors as $|A|$, but this tiling does not meet the conditions of Theorem \ref{long-periods} because $B_0$ is obviously $M/p_i$-periodic for each $i$. In the Chinese Remainder Theorem geometric representation, we may interpret $A$ as a discrete rectangular box, and $A\oplus B_0$ is a lattice tiling by translates of that box. To construct a non-periodic set tiling complement $B$, we perform a ``column shift" in each direction. Let
$$
a= (p_3^{n-1}-1)M/p_3^{n-1},\ \ b=(p_2^{n-1}-1)M/p_2^{n-1}+ (p_1^{n-1}-1)M/p_1^{n-1}
$$
and define
$$
B(X):= B(X)+ (X^{M/p_1^n}-1) B_1(X)+ (X^{a+ M/p_2^n}-X^a) B_2(X) +  (X^{b+ M/p_3^n}-X^b) B_3(X) .
$$
We still have $A\oplus B=\ZZ_M$, but now $B$ has no periods smaller than $M$. Constructions of this type go back to the work of Szab\'o \cite{Sz}.

Given $0<\beta<3/2$, we need to verify that $M\geq D^{\beta}$ for an appropriate choice of the parameters of the construction. We have
$$
\diam(A)=\frac{(p_1-1)M}{p_1^n} + \frac{(p_2-1)M}{p_2^n} + \frac{(p_3-1)M}{p_3^n} \leq \frac{3M}{p_1^{n-1}} .
$$
Choose a small $\epsilon>0$ to be fixed later, and let
$$
\alpha=\frac{(3-\epsilon)n}{2n+1}.
$$
Note that $0<\alpha<3/2$. Then
\begin{align*}
M&= p_1^np_2^n p_3^n = p_1^{\epsilon n} p_1^\alpha p_1^{n-\alpha-n\epsilon} p_2^n p_3^n
\\
&\geq p_1^{\epsilon n} 2^{-(n-\alpha-n\epsilon)} p_1^\alpha p_2^{(3n-\alpha -n\epsilon)/2} p_3^{(3n-\alpha -n\epsilon)/2} 
\\
&=  p_1^{\epsilon n} 2^{-(n-\alpha-n\epsilon)} 3^{-\alpha} (3p_1p_2^n p_3^n)^\alpha,
\end{align*}
since we chose $\alpha$ so that $n\alpha = (3n-\alpha-n\epsilon)/2$. 

Given $\beta$ with $0<\beta<3/2$, we may choose $\epsilon>0$ small enough and $n$ large enough so that 
$\beta<\alpha<3/2$. We fix these choices. Next,
$$
\frac{p_1^{\epsilon n}}{ 2^{n-\alpha-n\epsilon} 3^{\alpha} }
=
\left(\frac{2}{3}\right)^\alpha 
\frac{p_1^{\epsilon n}}{ 2^{n-n\epsilon}}
\geq 
\left(\frac{2}{3}\right)^{3/2}  
\left( \frac{p_1^\epsilon}{2}\right)^n.
$$
If we choose $p_1$ large enough so that $ (\frac{p_1^\epsilon}{2})^n >(\frac{3}{2})^{3/2}$, we may conclude that
$$
M\geq (3p_1p_2^n p_3^n)^\beta \geq (\diam(A))^\beta
$$
as claimed.

\section{The minimal tiling period and the Coven-Meyerowitz conjecture}
\label{CM-sec}

Coven and Meyerowitz \cite{CM} proved the following theorem.

\begin{theorem}\label{CM-thm} 
Let $S_A$ be the set of prime powers
$p^\alpha$ such that $\Phi_{p^\alpha}(X)$ divides $A(X)$.  Consider the following conditions.

\smallskip
{\it (T1) $A(1)=\prod_{s\in S_A}\Phi_s(1)$,}

\smallskip
{\it (T2) if $s_1,\dots,s_k\in S_A$ are powers of different
primes, then $\Phi_{s_1\dots s_k}(X)$ divides $A(X)$.}
\medskip

Then:

\begin{itemize}

\item[(i)] if $A$ satisfies (T1), (T2), then $A$ tiles $\ZZ$;

\item[(ii)]  if $A$ tiles $\ZZ$ then (T1) holds;

\item[(iii)] if $A$ tiles $\ZZ$ and $|A|$ has at most two distinct prime factors,
then (T2) holds.
\end{itemize}

\end{theorem}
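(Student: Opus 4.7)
The plan is to handle parts (ii), (i), (iii) in that order, using the polynomial formulation from Section \ref{upper-bound-sec}. For (ii), starting from (\ref{poly-e1}), each $\Phi_s$ with $s\mid M$, $s>1$ divides $A(X)B(X)$ in $\ZZ[X]$ and, by irreducibility, divides either $A(X)$ or $B(X)$. Evaluating the tiling identity at $X=1$ gives $|A||B|=M$. Combining this with $\prod_{s\mid M,s>1}\Phi_s(1)=M$ and the standard formulas $\Phi_s(1)=p$ when $s=p^\alpha$ is a prime power and $\Phi_s(1)=1$ otherwise, and matching the prime-power contributions on the two sides according to the partition of prime-power divisors of $M$ between $S_A$ and $S_B$, yields $|A|=\prod_{s\in S_A}\Phi_s(1)$.

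For (i), assume (T1) and (T2). Set $S_A^p=\{\alpha: p^\alpha\in S_A\}$, $\alpha_p=\max S_A^p$, and $M=\lcm(S_A)=\prod_p p^{\alpha_p}$. The plan is to construct an explicit complement $B\subset\ZZ_M$ as a Cartesian product under the CRT isomorphism $\ZZ_M\cong\prod_p\ZZ_{p^{\alpha_p}}$: for each prime $p\mid |A|$, let $B_p\subset\ZZ_{p^{\alpha_p}}$ have mask polynomial $\prod_{\alpha\in\{1,\dots,\alpha_p\}\setminus S_A^p}\Phi_{p^\alpha}(X)$. Since $\Phi_{p^\alpha}(X)=\sum_{j=0}^{p-1}X^{jp^{\alpha-1}}$, the exponents of this product are distinct ``base-$p$ digits'' placed at disjoint positions, so each $B_p$ is a genuine $0/1$ set of size $p^{\alpha_p-|S_A^p|}$, giving $|A||B|=M$ by (T1). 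For each $s\mid M$ with $s>1$, write $s=\prod_i p_i^{\beta_i}$ with $\beta_i\geq 1$: either every $p_i^{\beta_i}\in S_A$ and then $\Phi_s\mid A$ directly from (T2), or some $p_j^{\beta_j}\notin S_A$, in which case the character-factorization identity $B(\zeta_s)=\prod_p B_p(\zeta_{p^{\beta_p}})$ forces $B(\zeta_s)=0$ (since $B_{p_j}(\zeta_{p_j^{\beta_j}})=0$), so $\Phi_s\mid B$. Hence $\frac{X^M-1}{X-1}\mid A(X)B(X)$ in $\ZZ[X]$, and the size count $|A||B|=M$ then upgrades this divisibility to the tiling identity (\ref{poly-e1}).

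For (iii), assume $|A|$ has at most two distinct prime factors $p,q$. We induct on $|A|$, the base case $|A|=1$ being trivial. The key input is a structural theorem on factorizations of cyclic groups of two-prime order---Sands's theorem or an equivalent---which guarantees that in any tiling $A\oplus B=\ZZ_N$ with $N=p^aq^b$, one of $A,B$ must be a union of cosets of a nontrivial subgroup of $\ZZ_N$. This periodicity combined with Tijdeman's dilation theorem \cite{Tij} reduces either $A$ or $B$ to a strictly smaller tile of the same two-prime type, and the inductive hypothesis applied to the smaller tile then yields the required divisibilities $\Phi_{p^\alpha q^\beta}\mid A$ by lifting cyclotomic factors through the reduction.

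The hardest step will be (i): the candidate $B$ is explicit, but verifying that the cyclotomic factors supplied by (T2) mesh correctly with the prime-local structure of $B$ requires the character-factorization identity above, which depends essentially on $B$ being a CRT-Cartesian product---whereas $A$ itself is generically not. A secondary difficulty is in (iii), since Sands's theorem fails at three or more primes, and this is precisely why (T2) remains conjectural in general and why the Coven-Meyerowitz conjecture is still open.
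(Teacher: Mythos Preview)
The paper does not prove Theorem~\ref{CM-thm}; it is quoted verbatim from Coven and Meyerowitz \cite{CM} as background for Section~\ref{CM-sec}, and no argument for any of (i)--(iii) appears in the text. So there is no ``paper's own proof'' to compare your proposal against.

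That said, your outline is essentially the original argument from \cite{CM}. A couple of places where your sketch is thinner than the actual proof would need to be:
\begin{itemize}
\item In (ii), the phrase ``matching the prime-power contributions \dots according to the partition of prime-power divisors of $M$ between $S_A$ and $S_B$'' hides two facts that require proof: first, that every $p^\alpha\in S_A$ actually divides $M$ (so that $S_A$ really sits inside the prime-power divisors of $M$); second, that $S_A$ and $S_B$ are \emph{disjoint}. Both follow from the observation that $\prod_{j\in J}\Phi_{p^j}\mid A$ forces $p^{|J|}\mid |A|$ (monic divisibility in $\ZZ[X]$ evaluated at $X=1$), combined with $|A||B|=M$, but you should make this step explicit.
\item In (iii), Sands's theorem as you invoke it is the right structural input for two primes, but the reduction is not literally ``Tijdeman's dilation theorem''; Coven and Meyerowitz instead use an explicit quotient/periodicity step (their Lemma~2.5 and surrounding material) to pass to a smaller cyclic group. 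Your plan is morally correct but would need reworking to match the actual mechanics.
\end{itemize}
Your construction for (i) is exactly the standard one, and the character-factorization identity you state for a CRT-product set $B$ is correct and is the crux of why (T2) suffices.
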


The conjecture that (T2) holds for all finite integer tiles has become known as the {\it Coven-Meyerowitz conjecture}. It remains open in general; see \cite{LL1,LL2,LL3} for recent progress.

The proof of Theorem \ref{CM-thm} (i) in \cite{CM} is by explicit construction. Throughout the rest of this section, we fix
$$
M:={\rm lcm}(S_A)=p_1^{n_1}\dots p_k^{n_k}.
$$
Assuming that $A$ satisfies (T1) and (T2), Coven and Meyerowitz construct an explicit tiling of the integers by $A$ with period $M$.
On the other hand, we always have
$$
D=\diam(A) \geq M/2.
$$
Indeed, we may assume that $\min(A)=0$. Since $\Phi_M|A$ by (T2), we have $A(e^{2\pi i /M})=0$. This would not be possible with $D<M/2$, since then we would have Im$(e^{2\pi i a/M})\geq 0$ for all $a\in A$, with a strict inequality at least once. This implies that if $A$ tiles the integers and satisfies (T2), then it also admits a tiling (possibly a different one) with period $M\leq 2D$.

Coven and Meyerowitz also make the stronger claim without proof (page 167, remarks after Lemma 2.1) that for any set $A$ of nonnegative integers we have $M\leq pD/(p-1)$, or equivalently,
\begin{equation}\label{large-diam}
\diam(A)\geq \frac{p-1}{p}M
\end{equation}
where $p$ is the smallest prime factor of $|A|$. This would imply that if $A$ tiles the integers and satisfies (T2), then it also tiles with period at most $pD/(p-1)$.
However, we could not reproduce their proof and do not believe their claim to be true in general. 

The details are as follows. For general sets of integers (that do not necessarily tile the integers or satisfy (T2)), an easy counterexample is provided by the set $A$ with the mask polynomial
$$
A(X)=\Phi_{p^2}(X)\Phi_{q^2}(X)=(1+X^p+\dots+X^{(p-1)p}) (1+X^q+\dots +X^{(q-1)q}),
$$
where $p,q$ are large primes such that $p<q<2p$.
Then $M=\lcm(S_A)=p^2q^2$, but $D=(p-1)p+(q-1)q$ which is much smaller than $\frac{p-1}{p}M$.

%We verify that $A$ is a set (of course, it does not tile the integers or satisfy (T2)). It suffices to check that the numbers $kp+\ell q$ with $0\leq k\leq p-1$ and $0\leq \ell\leq q-1$ are all distinct. Assume towards contradiction that $kp+\ell q=k'p+\ell' q$, then
%$$
%(k-k')p=(\ell'-\ell)q.
%$$
%Hence $q|(k-k')$. Since $-q<-(p-1)\leq k-k'\leq p-1<q$, we have $k-k'=0$. But then also $\ell=\ell'$, and the claim is proved.
%
%
%\bigskip
%

The more interesting question is whether (\ref{large-diam})
holds if we assume that $A$ satisfies (T1) and (T2). This is in fact true if $|A|$ has at most two distinct prime factors. Indeed, in that case $M$ also has at most two distinct prime factors, say $p$ and $q$. As above, (T2) implies that $\Phi_M|A$. By the 2-prime case of the structure theorem for vanishing sums of roots of unity \cite{deB} (see also \cite[Theorem 3.3 and Corollary 3.4]{LamLeung}), we have
$$
A(X)=P(X) \left(1+X^{M/p}+\dots+X^{(p-1)M/p}\right) + Q(X) \left(1+X^{M/q}+\dots+X^{(q-1)M/q}\right)
$$
mod $(X^M-1)$, for some polynomials $P(X)$ and $Q(X)$ with nonnegative integer coefficients. In other words, $A$ mod $M$ is a union of ``fibers" -- cosets of subgroups of $\ZZ_M$ of order $p$ and $q$. Since the diameter of each such coset is either $(p-1)M/p$ or $(q-1)M/q$, the inequality (\ref{large-diam}) follows. If $|A|$ is a prime power, the same proof applies with the terms involving $q$ removed.

The above argument no longer works when $|A|$ (therefore $M$) has 3 or more distinct prime factors. We still have $\Phi_M|A$, but the corresponding structure theory in this case \cite{deB,Re1,Re2,schoen} is more complicated, and there are many examples of sets $A$ such that $\Phi_M|A$ but $A$ does not contain any fibers (see e.g.~\cite{PR} for a long list of examples). Instead of just the fact that $\Phi_M|A$, we could try to use the full strength of the assumption that $A$ tiles and satisfies (T2); however, there are also examples of sets $A$ that tile the integers, satisfy (T2), satisfy $\Phi_M|A$, and do not contain any fibers \cite{BL}.

A deeper theorem due to Steinberger \cite{Steinberger3} states that (\ref{large-diam}) holds when $2/p_1>1/p_2+\dots+1/p_d$, where the primes dividing $M$ are ordered so that $p_1<\dots<p_d$. In particular, this is always true when $d=3$, since there are only 2 terms on the right side and $p_1<p_2<p_3$. Steinberger does not believe that (\ref{large-diam}) should hold in general, and neither do we.

\bibliographystyle{amsplain}

\vfill

\noindent{\sc Department of Mathematics, UBC, Vancouver,
B.C. V6T 1Z2, Canada}

\noindent{\it ilaba@math.ubc.ca}, {ORCID: 0000-0002-3810-6121}

\medskip

\noindent{\sc Department of Mathematics, Massachusetts Institute of Technology, 
Cambridge, MA 02139, USA}

\noindent{\it zakhdm@mit.edu}, ORCID: 0009-0007-6369-7190

\end{document}